\documentclass[12pt]{amsart}

\usepackage{amssymb,amsthm,graphicx,caption,mwe,float,adjustbox, comment, hyperref}%

\makeatletter
\newcommand*\rel@kern[1]{\kern#1\dimexpr\macc@kerna}
\newcommand*\widebar[1]{%
  \begingroup
  \def\mathaccent##1##2{%
    \rel@kern{0.8}%
    \overline{\rel@kern{-0.8}\macc@nucleus\rel@kern{0.2}}%
    \rel@kern{-0.2}%
  }%
  \macc@depth\@ne
  \let\math@bgroup\@empty \let\math@egroup\macc@set@skewchar
  \mathsurround\z@ \frozen@everymath{\mathgroup\macc@group\relax}%
  \macc@set@skewchar\relax
  \let\mathaccentV\macc@nested@a
  \macc@nested@a\relax111{#1}%
  \endgroup
}
\makeatother

\newtheorem{thm}{Theorem}
\newtheorem{que}{Question}
\newtheorem{proj}{Project}
\newtheorem{theorem}{Theorem}[section]

\newtheorem{proposition}[theorem]{Proposition}

\title [An answer regarding automorphisms of finite abelian groups]{An answer regarding automorphisms of finite abelian groups}
\author[McCulloch]{Ryan McCulloch}
\address{Department of Mathematics and Statistics, Binghamton University, Binghamton, NY 13902; rmccullo1985@gmail.com}
\date{\today}

\begin{document}
	
\begin{abstract}
In this note we provide a negative answer to the question: ``Is it true that for every positive rational number $r$ there exists a finite abelian group $G$ such that $|\mathrm{Aut}(G)|/|G| = r$?".  We show that if $r = a/b$ is a rational number (with $a$ and $b$ coprime integers) so that $r = |\mathrm{Aut}(G)|/|G|$ for a finite abelian group $G$, then $b$ is squarefree.  We also show that no odd prime can equal $ |\mathrm{Aut}(G)|/|G|$ for a finite abelian group $G$.
\end{abstract}

\subjclass[2010]{Primary 20D15; Secondary 20D45}
\keywords{Kourovka notebook, automorphisms, abelian groups, p-groups}

\maketitle

\section{Introduction}

In \cite{numaut} it is shown that the set of all rational numbers $r$ so that $r = |\mathrm{Aut}(G)|/|G|$ for a finite abelian group $G$ is dense in the interval $[0, \infty)$.  In the same paper, the following open problem is given.

\begin{que}
Is it true that for every positive rational number $r$ there exists a finite (abelian) group $G$ such that $|\mathrm{Aut}(G)|/|G| = r$?
\end{que}

The question also appears as \#21.97 the ``Kourovka notebook'', see \cite{notebook}.  We answer the question in the negative for finite abelian groups.  The question is still open (as far as we know) for the class of all finite groups.  The question has recently been answered in the positive for graphs, monoids, partial groups,
and posets, see \cite{monoid}.

Our main results are as follows.

\begin{thm}\label{Intro thm 1}
If $r = a/b = |\mathrm{Aut}(G)|/|G|$ for a finite abelian group $G$, where $a$ and $b$ are coprime integers, then $b$ is squarefree.
\end{thm}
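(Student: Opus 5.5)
Since $G$ is abelian, it is the direct product of its Sylow subgroups $G_p$, and $\mathrm{Aut}(G)=\prod_p \mathrm{Aut}(G_p)$. Hence
\[
\frac{|\mathrm{Aut}(G)|}{|G|}=\prod_p \frac{|\mathrm{Aut}(G_p)|}{|G_p|}.
\]
My plan is to show that for each $p$-group $G_p$ the ratio $|\mathrm{Aut}(G_p)|/|G_p|$, in lowest terms, has denominator $1$ or $p$. Then the denominator $b$ of the product divides the product of the distinct primes $p$ dividing $|G|$, which is squarefree. The denominators of different factors are powers of distinct primes, and taking a product can only cancel them further, never create a square.

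So fix a finite abelian $p$-group $P$ of type $\lambda$. I will write it as $P=\prod_{i=1}^{k} (\mathbb{Z}/p^{e_i})^{m_i}$ with $e_1>\dots>e_k\ge 1$. The standard formula (Hillar--Rhea) gives
\[
|\mathrm{Aut}(P)| = \prod_{i=1}^{k}|\mathrm{GL}_{m_i}(\mathbb{F}_p)| \cdot p^{N}
\]
for an explicit exponent $N$. Equivalently, $\mathrm{Aut}(P)$ has a normal $p$-subgroup, namely the kernel of the reduction map to $\prod_i \mathrm{GL}_{m_i}(\mathbb{F}_p)$, which acts on the layers $\Omega$ and $\mho$. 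The prime-to-$p$ part of $|\mathrm{Aut}(P)|$ is an integer, so the only possible denominator is a power of $p$. The claim therefore reduces to the inequality
\[
v_p(|\mathrm{Aut}(P)|) \ge v_p(|P|)-1 = \sum_i m_ie_i - 1.
\]

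To prove this I will produce enough $p$-elements of $\mathrm{Aut}(P)$ directly, rather than fight with the exact formula. The Sylow $p$-subgroup of $\prod \mathrm{GL}_{m_i}(\mathbb{F}_p)$ has order $p^{\sum_i \binom{m_i}{2}}$. The kernel of reduction contains all endomorphisms of the form $1+\varphi$, where $\varphi\in\mathrm{Hom}(P,P)$ has image inside $pP$ on each homogeneous block, or maps between blocks appropriately. Its order is $|\mathrm{End}(P)|/\prod_i p^{m_i^2}$, and
\[
|\mathrm{End}(P)|=p^{\sum_{i,j} m_im_j\min(e_i,e_j)}.
\]
Hence
\[
v_p|\mathrm{Aut}(P)| = \sum_{i,j} m_im_j\min(e_i,e_j) - \sum_i m_i^2 + \sum_i \binom{m_i}{2}.
\]
Comparing this with $\sum_i m_ie_i$ is the main step. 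The diagonal terms contribute $\sum_i m_i^2(e_i-1)+\binom{m_i}{2}$, and the off-diagonal terms add $2\sum_{i<j} m_im_je_j$. The worst case should be a single cyclic factor of each exponent.

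- If $m_i\ge 2$, then $m_i^2(e_i-1)+\binom{m_i}{2}\ge m_ie_i$ whenever $e_i\ge 1$: we have $m_i^2(e_i-1)\ge m_i(e_i-1)$ and $\binom{m_i}{2}\ge 1\cdot m_i/2\cdot(m_i-1)\ge m_i$ for $m_i\ge3$; the case $m_i=2$ is checked by hand.
- If $m_i=1$, the diagonal term is $e_i-1$, a deficit of exactly $1$. The cross terms $2m_im_je_j$ with another block $j$ must repay that deficit.

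So the deficit is at most $1$ overall: if at least two blocks exist, each cross term is at least $2$ and covers the deficits of both blocks in the pair, and a single block has deficit at most $1$. This yields $v_p \ge \sum m_ie_i-1$.

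The main obstacle is this bookkeeping inequality, specifically making sure that with many blocks having $m_i=1$ the cross terms cover all the deficits. I would handle it by pairing each deficient block $i\ge2$ with block $1$, using the term $2m_1m_ie_i\ge 2$. If needed I would double-check small cases such as $\mathbb{Z}/p^2\times\mathbb{Z}/p$ explicitly, where $|\mathrm{Aut}|=(p-1)^2p^3$, against the formula.
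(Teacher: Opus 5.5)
Your argument is correct and is essentially the paper's proof. Your valuation $v_p|\mathrm{Aut}(P)|=\sum_i m_i^2(e_i-1)+\sum_i\binom{m_i}{2}+2\sum_{i<j}m_im_je_j$ agrees with the paper's $\binom{n}{2}+d+c$ from Proposition~\ref{prop: dc}. Bounding it below by $v_p(|P|)-1$ so that each $p$-part has denominator $1$ or $p$, and then multiplying over distinct primes, is exactly how the paper derives Theorem~\ref{Intro thm 1}.

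One slip needs fixing. Your first bullet claims there is no deficit when $m_i\ge 2$, but this is false for $m_i=2$, $e_i=1$: a $(\mathbb{Z}/p)^2$ block gives $4\cdot 0+1=1<2$, which is why $(\mathbb{Z}_p)^2$ has non-integral ratio. The deficit of that block is still exactly $1$, though. Your covering argument only uses two facts: every block has deficit at most $1$, and $k\ge 2$ blocks produce cross terms totalling at least $k(k-1)\ge k$. So the conclusion stands.

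The paper's own inequalities $c\ge a-n$ and $\binom{n}{2}\ge n-1$ give the same bound $v_p|\mathrm{Aut}(P)|\ge a-1$ in one line, without any block-by-block bookkeeping.
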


\begin{thm}\label{Intro thm 2}
Every power of $2$ can equal $ |\mathrm{Aut}(G)|/|G|$ for a finite abelian group $G$.
\end{thm}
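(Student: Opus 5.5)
The plan is to exhibit an explicit finite abelian $2$-group, or a $2$-group times a small odd part, for each exponent $k\ge 0$, and to read off $|\mathrm{Aut}(G)|/|G|$ from a block computation of automorphisms. I read ``power of $2$'' as $2^k$ with $k\ge 0$. The basic tool is that $\mathrm{Aut}$ of a finite abelian group is the direct product of the automorphism groups of its Sylow subgroups. Hence the ratio $|\mathrm{Aut}(G)|/|G|$ is multiplicative over the primary components.

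For a $p$-group $\bigoplus_i \mathbb{Z}_{p^{e_i}}$, I will use the standard description of $\mathrm{End}$ as matrices $(\varphi_{ij})$ with $\varphi_{ij}\in \mathrm{Hom}(\mathbb{Z}_{p^{e_j}},\mathbb{Z}_{p^{e_i}})$, where $|\mathrm{Hom}|=p^{\min(e_i,e_j)}$. When all exponents are distinct, an endomorphism is an automorphism iff each diagonal entry is a unit. This is because reducing modulo the appropriate Frattini-type filtration makes the matrix triangular. So $|\mathrm{Aut}| = |\mathrm{End}|\cdot ((p-1)/p)^{n}$ for $n$ distinct exponents. When one exponent occurs with multiplicity $m$, the corresponding diagonal block instead contributes the factor $|GL_m(\mathbb{F}_p)|/p^{m^2}$.

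The key computations are then as follows.
\begin{itemize}
\item For $G=\mathbb{Z}_{2^a}\oplus\mathbb{Z}_{2^b}$ with $1\le a<b$, we get $|\mathrm{End}|=2^{a+b+2a}$ and $|\mathrm{Aut}|=2^{3a+b-2}$. Hence the ratio is $2^{2a-2}$, which realizes $1,4,16,\dots$, i.e.\ all even exponents $k=2a-2$.
\item For $G=\mathbb{Z}_2\oplus\mathbb{Z}_{2^a}\oplus\mathbb{Z}_{2^b}$ with $1<a<b$, we get $|\mathrm{End}|=2^{(1+a+b)+2\cdot 1+2\cdot 1+2a}=2^{5+3a+b}$ and $|\mathrm{Aut}|=2^{2+3a+b}$. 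Hence the ratio is $2^{1+2a}$, which realizes all odd exponents $k\ge 5$.
\item The remaining odd exponents $k=1$ and $k=3$ I would handle by mixing primes. We have $\mathbb{Z}_2$ with ratio $1/2$, and $\mathbb{Z}_3\oplus\mathbb{Z}_9$, whose ratio is $(3-1)^2\cdot 3^{0}=4$ by the two-cyclic formula $(p-1)^2p^{2a-2}$. So $G=\mathbb{Z}_2\oplus\mathbb{Z}_3\oplus\mathbb{Z}_9$ gives ratio $2$.
\item For $k=3$, take $\mathbb{Z}_2^2\oplus\mathbb{Z}_4$. It has $|\mathrm{Aut}|=|GL_2(\mathbb{F}_2)|\cdot 2\cdot 4\cdot 4=192$, so its ratio is $12$. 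Multiplying by $\mathbb{Z}_3$, with ratio $2/3$, gives $8$.
\end{itemize}

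The main obstacle is justifying the automorphism criterion, i.e.\ the claim that the unit-diagonal condition (or $GL_m$ on repeated blocks) characterizes invertibility. The safest route is to cite the known formula for $|\mathrm{Aut}|$ of a finite abelian $p$-group (Hillar--Rhea) rather than reprove it. I will sanity-check the formula on $\mathbb{Z}_2\oplus\mathbb{Z}_4$, where $|\mathrm{Aut}|=8$ and the ratio is $1$, matching $2^{2\cdot1-2}$. If the intended statement also covers negative powers $2^{-k}$, these would need a separate family. I would first try $\mathbb{Z}_{2^n}$, with ratio $1/2$, combined with odd components whose ratios are $2$-power fractions with the odd part cancelling. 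The step I expect to be hardest is finding such cancelling odd components, since Fermat primes are scarce.
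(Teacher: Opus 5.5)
Your proof is correct and is essentially the paper's: it uses the same families $\mathbb{Z}_{2^a}\times\mathbb{Z}_{2^b}$ and $\mathbb{Z}_2\times\mathbb{Z}_{2^a}\times\mathbb{Z}_{2^b}$ (the paper takes $b=a+1$), the same witness $\mathbb{Z}_2\times\mathbb{Z}_3\times\mathbb{Z}_9$ for $2$, and the same reliance on the Hillar--Rhea count plus multiplicativity over Sylow components; the only difference is that for $8$ you use $\mathbb{Z}_2^2\times\mathbb{Z}_4\times\mathbb{Z}_3$ (ratio $12\cdot(2/3)$, which checks out) where the paper uses $\mathbb{Z}_2\times\mathbb{Z}_5\times\mathbb{Z}_{25}$ (ratio $(1/2)\cdot 16$). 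Your closing worry about negative powers is moot: by Theorem~\ref{Intro thm 1} the reduced denominator must be squarefree, so $2^{-k}$ with $k\ge 2$ never occurs, and the statement is meant for exponents $k\ge 0$.
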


\begin{thm}\label{Intro thm 3}
No odd prime can equal $ |\mathrm{Aut}(G)|/|G|$ for a finite abelian group $G$.
\end{thm}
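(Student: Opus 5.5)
The plan is to reduce everything to one prime at a time and then compare $2$-adic valuations. Write $G=\prod_p G_p$ with $G_p$ the Sylow $p$-subgroups. Then $\mathrm{Aut}(G)=\prod_p \mathrm{Aut}(G_p)$, so $|\mathrm{Aut}(G)|/|G|=\prod_p r_p$ with $r_p=|\mathrm{Aut}(G_p)|/|G_p|$. For an abelian $p$-group $G_p\cong\prod_i (\mathbb{Z}/p^{e_i})^{m_i}$ with $e_1<\dots<e_k$, I will use the standard order formula for $\mathrm{Aut}(G_p)$ (the one underlying Theorem~\ref{Intro thm 1}). It gives
\[
r_p=p^{t_p}\,u_p,\qquad u_p=\prod_{i=1}^k\prod_{j=1}^{m_i}(p^j-1),
\]
where $t_p\in\mathbb{Z}$ is an explicit quadratic expression in the $e_i,m_i$.

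The key lemma, which I expect is also what drives Theorem~\ref{Intro thm 1}, is that $t_p\ge -1$, with equality if and only if $G_p$ is nontrivial cyclic. In that case $r_p=(p-1)/p$. I also need the list of small cases with $t_2=0$. Proving this lemma from the explicit formula for $t_p$ is the main obstacle. I would first check homocyclic groups, where $t=(e-1)m^2-em+m(m-1)/2$, and then show that adding further parts only increases $t$, since the cross terms $2m_im_je_i$ dominate.

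Now suppose $\prod_p r_p=q$ for an odd prime $q$, and compare $2$-adic valuations. For every odd $p\mid |G|$ the integer $u_p$ is even, because it contains the factor $p-1$. Hence
\[
0=v_2(q)=t_2+\sum_{p\text{ odd}} v_2(u_p),
\]
where $u_2$ is odd and $t_2\ge -1$ (set $t_2=0$ if $G_2=1$). So at most one odd prime $p$ divides $|G|$, and for it $v_2(u_p)=1$. This forces $u_p$ to consist of the single factor $p-1$, so $G_p$ is cyclic. Then $t_p=-1$ and $t_2=-1$, so $G_2$ is also nontrivial cyclic by the lemma. This gives $r=\tfrac12\cdot\tfrac{p-1}{p}$, which is not an integer. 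If instead no odd prime divides $|G|$, then $G$ is a $2$-group with $2^{t_2}u_2=q$, so $t_2=0$ and $u_2=q$.

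It remains to rule out this last case. Since each factor $2^j-1$ with $j\ge2$ is at least $3$, $u_2$ prime forces every multiplicity $m_i\le 2$, with exactly one $m_i=2$, and then $q=3$. (For $m=3$ the product is $21$, which is not prime.) So $G$ has exactly one repeated part $(\mathbb{Z}/2^e)^2$. For the homocyclic piece the formula gives $t=4(e-1)-2e+1=2e-3$, which is odd and hence nonzero. With additional parts, I would show from the formula that $t_2$ changes by an even amount plus positive cross terms, so $t_2\ge1$. Hence $t_2\neq0$, a contradiction. The delicate bookkeeping is in this final parity/positivity check and in the lemma $t_p\ge-1$ with its equality case; the rest is valuation counting.
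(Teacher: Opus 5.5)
There is a genuine gap, and it sits in the equality case of your key lemma. The claim "$t_p=-1$ if and only if $G_p$ is nontrivial cyclic" is false: for $G_p\cong\mathbb{Z}_p\times\mathbb{Z}_p$ one has $r_p=(p-1)^2(p+1)/p$, so $t_p=-1$ as well. Your own homocyclic formula shows this, since $(e-1)m^2-em+m(m-1)/2$ equals $-1$ at $e=1$, $m=2$ (equivalently, your later $2e-3$ equals $-1$ at $e=1$). The correct statement follows from the decomposition
\[
t_p=\sum_i t(e_i,m_i)+2\sum_{i<j}e_im_im_j .
\]
It says that $t_p\ge -1$, with equality exactly when $G_p$ is cyclic or $G_p\cong\mathbb{Z}_p^2$.

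Because of this, your mixed case is incomplete. From $t_2=-1$ you may only conclude that $G_2$ is cyclic or $G_2\cong\mathbb{Z}_2^2$. The second option gives
\[
r=\frac{3}{2}\cdot\frac{p-1}{p}=\frac{3(p-1)}{2p},
\]
and nothing you wrote excludes it. This ratio is an integer precisely when $p=3$, that is, for $G\cong\mathbb{Z}_2^2\times\mathbb{Z}_3$, where $r=1$. The theorem survives because $1$ is not prime, but the argument must treat this case explicitly. It is exactly the subcase the paper handles when it forces $a_1=3$, then $p_2=3$, and obtains $|\mathrm{Aut}(G)|/|G|=1$.

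The rest of your plan works. For odd $p$, the bound $v_2(u_p)\ge \mathrm{rank}(G_p)$ correctly forces $G_p$ to be cyclic. In the pure $2$-group case, one block $(\mathbb{Z}/2^e)^2$ together with $s\ge 1$ cyclic blocks gives $t_2\ge s^2+2s-1\ge 2$ by the displayed decomposition, so $t_2\ne 0$ as you claimed.
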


The results follow by analyzing a formula for $|\mathrm{Aut}(G)|$ when $G$ is a finite abelian $p$-group, and extending to finite abelian groups via the fundamental theorem of abelian groups.

\section{Proofs of Results}

The description of $\mathrm{Aut}(G)$ when $G$ is a finite abelian group has been known since at least 1907, see \cite{oldaut}.  By the fundamental theorem of abelian groups, and since $\mathrm{Aut}(G \times H) = \mathrm{Aut}(G) \times \mathrm{Aut}(H)$ when $G$ and $H$ have coprime order, it suffices to understand $\mathrm{Aut}(G)$ when $G$ is an abelian $p$-group.  We work with the formula for $|\mathrm{Aut}(G)|$ appearing in \cite{aut}.

\begin{proposition}\label{prop: form}
Let $G \cong \mathbb{Z}_{p^{e_1}} \times \cdots \times \mathbb{Z}_{p^{e_n}}$ be a finite abelian $p$-group, where $1 \leq e_1 \leq \dots \leq e_n$.  Then $$|\mathrm{Aut}(G)| = \prod_{k=1}^{n} (p^{d_k} - p^{k-1}) \prod_{j=1}^{n} (p^{e_j})^{n-d_j} \prod_{i=1}^{n} (p^{e_i-1})^{n-c_i+1},$$ where $d_r = \max\{s: e_s = e_r\}$ and $c_r = \min\{s: e_s = e_r\}$ for $r = 1, \dots, n$.
\end{proposition}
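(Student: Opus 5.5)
The plan is to follow the standard approach of Hillar and Rhea: identify $\mathrm{End}(G)$ with a ring of integer matrices, characterize the automorphisms as the endomorphisms whose reduction mod $p$ is invertible, and count. Write $G=\bigoplus_{i=1}^n \mathbb{Z}_{p^{e_i}}$ with generators $g_1,\dots,g_n$. An endomorphism is determined by the images $g_j\mapsto \sum_i a_{ij} g_i$, subject only to the order constraint $p^{e_j}\cdot(\text{image})=0$. This forces $a_{ij}$ to be divisible by $p^{e_i-e_j}$ whenever $e_i>e_j$, and imposes no condition otherwise. Hence $\mathrm{End}(G)$ is in bijection with matrices $(a_{ij})$ in which $a_{ij}$ is taken modulo $p^{e_i}$, and $a_{ij}\in p^{e_i-e_j}\mathbb{Z}/p^{e_i}$ when $e_i>e_j$. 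Counting gives $|\mathrm{End}(G)|=\prod_{i,j} p^{\min(e_i,e_j)}$.

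The next step is to characterize automorphisms. The endomorphism is an automorphism iff the induced map on $G/pG\cong \mathbb{F}_p^n$ is invertible, since a surjective endomorphism of a finite group is bijective and surjectivity can be detected mod the Frattini subgroup $pG$. Reducing $a_{ij}$ mod $p$ gives a matrix that is block lower triangular: the blocks are indexed by the distinct values of $e$, and the entries with $e_i>e_j$ vanish mod $p$. So invertibility is equivalent to each diagonal block being invertible over $\mathbb{F}_p$.

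Now count. Each entry $a_{ij}$ ranges over a set of size $p^{\min(e_i,e_j)}$, which splits as (residue mod $p$)$\times$(lift), except that the entries with $e_i>e_j$ have residue $0$ and no mod-$p$ freedom. I expect the main obstacle to be carrying out the counting so that it lands exactly on the stated factorization with $d_k, c_i$, rather than on an equivalent but differently arranged product. I would fill the columns of the mod-$p$ reduction one at a time, $k=1,\dots,n$. Column $k$ lives in the coordinates $i\le d_k$, because entries below the block vanish mod $p$, and it must avoid the span of the previous $k-1$ columns. Those columns are independent and also lie in these coordinates, since $d$ is nondecreasing. This gives $p^{d_k}-p^{k-1}$ choices, and multiplying over $k$ produces the first product.

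It remains to count lifts and the remaining free entries. For entries $a_{ij}$ with $i\le d_j$ (so $e_i\le e_j$), the residue mod $p$ is already fixed, which leaves $p^{e_i-1}$ lifts. For entries with $i>d_j$ (so $e_i>e_j$), there are $p^{e_j}$ choices. The third step is a regrouping. In row $i$, the number of columns $j$ with $d_j\ge i$ is $n-c_i+1$, giving $\prod_i (p^{e_i-1})^{n-c_i+1}$. In column $j$, the number of rows $i>d_j$ is $n-d_j$, giving $\prod_j (p^{e_j})^{n-d_j}$. The care needed is only in verifying these two index counts, using that the $e_i$ are sorted.
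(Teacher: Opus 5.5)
Your proof is correct. It is essentially the Hillar--Rhea argument of \cite{aut}, which the paper quotes rather than reproves: encode endomorphisms by matrices with $p^{e_i-e_j}\mid a_{ij}$ when $e_i>e_j$, recognize automorphisms by invertibility mod $p$, count the admissible invertible reductions column by column to get $\prod_k(p^{d_k}-p^{k-1})$, and multiply by the constant fibre size $\prod_j (p^{e_j})^{n-d_j}\prod_i (p^{e_i-1})^{n-c_i+1}$. The one slip is cosmetic: the entries forced to vanish mod $p$ (those with $e_i>e_j$) lie below the diagonal, so the reduction is block \emph{upper} triangular, which is what your column count (column $k$ supported in rows $i\le d_k$) actually uses.
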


Theorem \ref{Intro thm 2} follows directly from Proposition \ref{prop: form} by exhibiting the required abelian groups.

\begin{proposition}
We have the following.
\begin{enumerate}
\item For each $i \geq 1$, $G \cong \mathbb{Z}_{2^{i}} \times \mathbb{Z}_{2^{i+1}}$ has $|\mathrm{Aut}(G)|/|G| = 2^{2(i-1)}$. 
\item For each $i \geq 2$, $G \cong \mathbb{Z}_{2} \times \mathbb{Z}_{2^{i}} \times \mathbb{Z}_{2^{i+1}}$ has $|\mathrm{Aut}(G)|/|G| = 2^{2i+1}$. 
\item For $G \cong \mathbb{Z}_{2} \times \mathbb{Z}_{3} \times \mathbb{Z}_{9}$, we have $|\mathrm{Aut}(G)|/|G| = 2$. 
\item For $G \cong \mathbb{Z}_{2} \times \mathbb{Z}_{5} \times \mathbb{Z}_{25}$, we have $|\mathrm{Aut}(G)|/|G| = 8$. 
\end{enumerate}
\end{proposition}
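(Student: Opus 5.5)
The plan is to evaluate the formula of Proposition \ref{prop: form} directly in each case. For groups of mixed order, as in (3) and (4), we first split off the Sylow subgroups and use $\mathrm{Aut}(G\times H)=\mathrm{Aut}(G)\times\mathrm{Aut}(H)$ for coprime $|G|,|H|$. In all four cases the exponents $e_1<e_2<\dots<e_n$ are pairwise distinct. So $d_k=c_k=k$ for every $k$, and the formula simplifies to
$$|\mathrm{Aut}(G)|=\prod_{k=1}^n (p^k-p^{k-1})\prod_{j=1}^n p^{e_j(n-j)}\prod_{i=1}^n p^{(e_i-1)(n-i+1)}.$$
This collapse is the only conceptual point. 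After it, each item is a short bookkeeping exercise with exponents of $p$, followed by division by $|G|=p^{\sum e_j}$.

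For (1) take $p=2$, $n=2$, $(e_1,e_2)=(i,i+1)$. Then:
\begin{itemize}
\item the first product is $(2-1)(4-2)=2$;
\item the second is $2^{i}$;
\item the third is $2^{2(i-1)}\cdot 2^{i}$.
\end{itemize}
This gives $|\mathrm{Aut}(G)|=2^{4i-1}$. Since $|G|=2^{2i+1}$, the ratio is $2^{2i-2}$.

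For (2) take $n=3$, $(e_1,e_2,e_3)=(1,i,i+1)$. The hypothesis $i\ge 2$ is exactly what keeps the exponents distinct. Then:
\begin{itemize}
\item the first product is $1\cdot 2\cdot 4=8$;
\item the second is $2^{2}\cdot 2^{i}$;
\item the third is $2^{0}\cdot 2^{2(i-1)}\cdot 2^{i}$.
\end{itemize}
This gives $|\mathrm{Aut}(G)|=2^{4i+3}$. Since $|G|=2^{2i+2}$, the ratio is $2^{2i+1}$.

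For (3), $\mathrm{Aut}(\mathbb{Z}_2)$ is trivial. For $\mathbb{Z}_3\times\mathbb{Z}_9$ we have $p=3$ and $(e_1,e_2)=(1,2)$. The three products are $(3-1)(9-3)=12$, then $3$, then $3^0\cdot 3=3$. So $|\mathrm{Aut}(G)|=108$, and with $|G|=54$ the ratio is $2$.

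For (4) the same computation with $p=5$ gives $(5-1)(25-5)\cdot 5\cdot 5=2000$. Since $|G|=250$, the ratio is $8$.

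Finally, I would record why these cases give every power of $2$, which is Theorem \ref{Intro thm 2}. Item (1) yields $2^0,2^2,2^4,\dots$. Item (2) yields $2^5,2^7,\dots$. Items (3) and (4) fill in the remaining values $2^1$ and $2^3$.

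The main obstacle is keeping the indices $d_k, c_k$ straight in the formula. Once it is observed that distinct exponents make $d_k=c_k=k$, nothing else can go wrong. The only care needed is in (2), where the case $i=1$ must be excluded because it would create a repeated exponent.
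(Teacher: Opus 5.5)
Your proposal is correct and takes the same approach as the paper, which gives no written proof and says only that these items follow by directly evaluating the formula of Proposition \ref{prop: form}. Your reduction to $d_k=c_k=k$ for distinct exponents, the resulting counts ($2^{4i-1}$, $2^{4i+3}$, $108$, $2000$), and your remark that $i\ge 2$ is needed in (2) all check out.
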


We obtain a formula for the largest $p$-power dividing $|\mathrm{Aut}(G)|$ by considering the contribution coming from each product in Proposition \ref{prop: form}.  In our formulation below, we let $k_i$ denote the multiplicity of a $\mathbb{Z}_{p^{e_i}}$ term appearing in $G$, and so note that in Proposition \ref{prop: dc}, we have $1 \leq e_1 < \dots < e_m$.

\begin{proposition}\label{prop: dc}
Let $G \cong (\mathbb{Z}_{p^{e_1}})^{k_1} \times \cdots \times (\mathbb{Z}_{p^{e_m}})^{k_m}$ be a finite abelian $p$-group, where $1 \leq e_1 < \dots < e_m$ and each $k_i \geq 1$.  Let $n = k_1 + \cdots + k_m$ and let $$d = \sum_{j=1}^{m-1} \Big( e_j k_j \big(\sum_{i=j+1}^m k_i \big) \Big)$$ and let $$c = \sum_{j=1}^{m} \Big( (e_j-1)k_j\big(\sum_{i=j}^m k_i \big) \Big).$$  Then $p^{((n-1)n/2) + d + c}$ is the largest $p$-power dividing $|\mathrm{Aut}(G)|$.
\end{proposition}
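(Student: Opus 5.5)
We apply Proposition \ref{prop: form} to $G$, written with exponents listed in nondecreasing order, $e'_1 \le \dots \le e'_n$, so that the $k_1$ copies of $\mathbb{Z}_{p^{e_1}}$ occupy positions $1,\dots,k_1$, the next $k_2$ copies occupy positions $k_1+1,\dots,k_1+k_2$, and so on. We then compute the exact power of $p$ contributed by each of the three products.

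\textbf{First product.} For an index $r$ in the $j$-th block we have $d_r = k_1+\cdots+k_j \ge r$, with $d_r \geq k$ whenever $k=r$. So for every $k$, $p^{d_k}-p^{k-1} = p^{k-1}(p^{d_k-k+1}-1)$. Since $d_k \ge k$, the factor $p^{d_k-k+1}-1$ is coprime to $p$, because the exponent is at least $1$. Hence the first product contributes exactly $p^{\sum_{k=1}^n (k-1)} = p^{n(n-1)/2}$. This nonvanishing check, that $d_k-k+1\ge 1$ so no factor collapses to $0$ or contributes extra $p$'s, is the only point that needs care.

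\textbf{Second product.} It is already a pure $p$-power. For $r$ in block $j$, $n-d_r = \sum_{i=j+1}^m k_i$, and there are $k_j$ such indices, each with exponent $e_j$. Summing gives
\[
\sum_{j=1}^{m} e_j k_j \sum_{i=j+1}^m k_i .
\]
The $j=m$ term is empty, so this equals $d$.

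\textbf{Third product.} For $r$ in block $j$, $c_r = k_1+\cdots+k_{j-1}+1$, so $n-c_r+1 = \sum_{i=j}^m k_i$. The $k_j$ indices in block $j$ each contribute $(e_j-1)\sum_{i=j}^m k_i$, and summing over $j$ gives $c$.

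Multiplying the three contributions, the largest $p$-power dividing $|\mathrm{Aut}(G)|$ is $p^{n(n-1)/2+d+c}$. Beyond the first-product check, the argument is bookkeeping: translating $d_r$ and $c_r$ into block sums.
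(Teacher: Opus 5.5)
Your proposal is correct and is the paper's approach: the paper obtains this proposition exactly by reading off the power of $p$ contributed by each of the three products in Proposition \ref{prop: form}. Your block-by-block translation of $d_r$ and $c_r$, together with the check that $p^{d_k-k+1}-1$ is prime to $p$ because $d_k \ge k$, supplies the details the paper leaves implicit.
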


\begin{proposition}\label{prop: class}
Let $G$ be a finite abelian $p$-group.  
\begin{enumerate}
\item If $G$ is cyclic, then $|\mathrm{Aut}(G)|/|G| = (p-1)/p$.
\item If $G \cong \mathbb{Z}_{p} \times \mathbb{Z}_{p}$, then $|\mathrm{Aut}(G)|/|G| = (p-1)^2(p+1)/p$.
\item If $G \cong \mathbb{Z}_{p} \times \mathbb{Z}_{p^i}$, $i > 1$, then $|\mathrm{Aut}(G)|/|G| = (p-1)^2$.
\item If $G \cong \mathbb{Z}_{p} \times \mathbb{Z}_{p} \times \mathbb{Z}_{p}$, then $$|\mathrm{Aut}(G)|/|G| =(p-1)^3(p+1)(p^2+p+1).$$
\end{enumerate}
In all other cases, we have $|\mathrm{Aut}(G)|/|G|$ is an integer and is divisible by $p(p-1)^2$.
\end{proposition}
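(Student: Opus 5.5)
We split $|\mathrm{Aut}(G)|$ into its $p$-part and its $p'$-part. Proposition \ref{prop: dc} gives the exact $p$-part, and the formula of Proposition \ref{prop: form} gives the $p'$-part. In the $p'$-part, each factor $p^{d_k}-p^{k-1}$ equals $p^{k-1}(p^{d_k-k+1}-1)$. So the $p'$-part is $\prod_{k=1}^n (p^{d_k-k+1}-1)$, and every factor here is divisible by $p-1$. Since $|G| = p^{\sum k_je_j}$, the ratio $|\mathrm{Aut}(G)|/|G|$ equals
\[ p^{N}\prod_{k=1}^n (p^{d_k-k+1}-1), \qquad N = \tfrac{n(n-1)}{2}+d+c-\textstyle\sum_j k_je_j. \]
The whole statement therefore reduces to two tasks. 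First, decide the sign of $N$, and show $N\ge 1$ outside the listed cases. Second, show that at least two factors of the product are nontrivial multiples of $p-1$, which yields $(p-1)^2$.

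The four exceptional cases can be checked directly from this expression.
\begin{enumerate}
\item For $G$ cyclic we have $n=1$, $d=0$ and $c=e_1-1$, so $N=-1$ and the product is $p-1$.
\item For $\mathbb{Z}_p^2$ we get $N=1-2=-1$, and the product is $(p^2-1)(p-1)$.
\item For $\mathbb{Z}_p\times\mathbb{Z}_{p^i}$ with $i>1$ we have $d=1$ and $c=i-1$, so $N=1+1+(i-1)-(1+i)=0$. The product is $(p-1)^2$, since here $d_1=1$ and $d_2=2$.
\item For $\mathbb{Z}_p^3$ we get $N=3-3=0$, and the product is $(p^3-1)(p^2-1)(p-1)$.
\end{enumerate}

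For the general case I will rewrite $N$ by absorbing $-\sum k_je_j$ into $c$. The $c$-sum contains the term $(e_j-1)k_j\cdot(\text{stuff}\ge k_j)$, so
\[ c-\sum_j k_je_j \ge \sum_j\big((e_j-1)k_j^2 - e_jk_j\big) + (\text{cross terms}\ge 0). \]
It is cleaner to count exactly. We have $N = \binom{n}{2} + d + \sum_j (e_j-1)k_j K_j - \sum_j e_jk_j$, where $K_j=\sum_{i\ge j}k_i$. I will show $N\ge1$ by splitting into cases.
\begin{itemize}
\item Suppose $n\ge 3$ and we are not in the case $\mathbb{Z}_p^3$. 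Then either $n\ge4$, or $n=3$ with some $e_j\ge2$. The term $\binom n2$ together with $d$ (each pair of distinct exponents contributes $e_jk_jk_i$) and the $c$ terms should exceed $\sum e_jk_j$ by at least $1$. When all $e_j=1$ we get $N=\binom n2-n\ge 2$ for $n\ge4$. If some exponent is at least $2$, the corresponding $c$ or $d$ contributions grow at least as fast as $e_jk_j$.
\item For $n=2$ with $\mathbb{Z}_{p^a}\times\mathbb{Z}_{p^b}$, $a<b$ and $a\ge2$, we get $N=1+a+(a-1)2+(b-1)-a-b=2a-2\ge 2$.
\item For $\mathbb{Z}_{p^a}^2$ with $a\ge2$, we get $N=1+(a-1)\cdot 3-2a=a-2$. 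This is negative-looking when $a=2$, so it needs care.
\end{itemize}
Let me recompute the last case. Here $c=(a-1)\cdot k_1\cdot K_1=(a-1)\cdot 4$, so $N=1+4(a-1)-2a=2a-3\ge1$. So the coefficient must be $k_jK_j$, and I will carefully use $k_jK_j\ge k_j^2$ throughout.

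Given $N\ge1$, divisibility by $(p-1)^2$ needs at least two factors in the product; this holds whenever $n\ge2$. The cyclic case $n=1$ with $e_1\ge2$ is already exceptional case (1), so nothing more is needed there.

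The main obstacle is the clean general inequality $N\ge1$. My plan is to write $N=\binom n2+d+\sum_j (e_j-1)k_jK_j-\sum_j e_jk_j$ and pair each $-e_jk_j$ against $(e_j-1)k_jK_j$ plus a share of $\binom n2$ and $d$. I will then handle the small configurations ($n\le3$) by direct enumeration, as above.
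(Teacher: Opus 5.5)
\textbf{Verdict: gap in the case $n\ge 3$.} Your framework is the paper's. You factor the ratio as $p^N\prod_k(p^{d_k-k+1}-1)$, with each factor a multiple of $p-1$ because $d_k\ge k$. You take the exact exponent from Proposition~\ref{prop: dc}, and you reduce everything to showing $N\ge 1$ outside the four listed cases. The following checks are all correct:
\begin{enumerate}
\item items (1)--(4);
\item $\mathbb{Z}_p^n$ for $n\ge 4$;
\item the two $n=2$ families, with $N=2a-2$ and, after your correction, $N=2a-3$;
\item the $(p-1)^2$ factor for $n\ge 2$.
\end{enumerate}

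What is missing is the case $n\ge 3$ with some $e_j\ge 2$. This is an infinite family, so it cannot be settled by ``direct enumeration.'' Your sentence that the $c$ or $d$ contributions ``grow at least as fast as $e_jk_j$'' is not an argument. Taken index by index, it is false. If $k_m=1$, the top exponent contributes only $(e_m-1)k_mK_m=e_m-1<e_mk_m$ to $c$, and $d$ has no term indexed by $m$. That deficit has to be paid out of $n(n-1)/2$.

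Your closing plan is the right repair, and it is what the paper does via $c\ge a-n$ and $n(n-1)/2\ge n$ for $n\ge 3$; you only have to execute it. Give index $j$ the share $k_j$ of $n(n-1)/2$, and use $\sum_j(e_j-1)k_j=\sum_j e_jk_j-n$. With $K_j=\sum_{i\ge j}k_i$ as you defined, this gives the identity
\[
N=\Big(\frac{n(n-1)}{2}-n\Big)+d+\sum_{j=1}^{m}(e_j-1)k_j(K_j-1).
\]
For $n\ge 3$ every summand is nonnegative, so $N\ge 0$. Moreover, $N=0$ forces:
\begin{enumerate}
\item $n=3$, from the first summand;
\item $d=0$, hence $m=1$, since $m\ge 2$ gives $d\ge e_1k_1k_2\ge 1$;
\item $(e_1-1)\cdot 3\cdot 2=0$, so $e_1=1$.
\end{enumerate}
That is, $N=0$ only for $G\cong\mathbb{Z}_p^3$. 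This disposes of all $n\ge 3$ at once, with no enumeration, and your remaining steps then complete the proof.
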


\begin{proof}
Items (1)--(4) are readily verified using Proposition \ref{prop: form}.  If $G$ is not cyclic, then by Proposition \ref{prop: form} we have $(p-1)^2$ divides $|\mathrm{Aut}(G)|$.  We work to show that in all cases other than items (1)--(4), we have $|\mathrm{Aut}(G)|/|G|$ is an integer and is divisible by $p$.

Use the same notation as in Proposition \ref{prop: dc} so that $|G| = p^a$ where $a = {\sum_{i=1}^m e_i k_i}$ and $n = {\sum_{i=1}^m k_i}$.  Note that $c \geq a-n$.  Also note that $(n-1)n/2 \geq n$ when $n \geq 3$ and $(n-1)n/2 = n-1$ when $n=1$ or $n=2$.  Suppose $n \geq 3$.  By Proposition \ref{prop: dc}, $|\mathrm{Aut}(G)|/|G|$ is an integer, and in this case (when $n \geq 3$), since $c + n \geq a$, we have $|\mathrm{Aut}(G)|/|G|$ coprime with $p$ implies $d=0$ and $c = a - n$.  Now $d=0$ implies $m=1$ and hence $k_1 = n \geq 3$.  Thus $c = a - n$ implies $e_1 = 1$, and we are in the situation of item (4).  Thus we can suppose $n \leq 2$.  Note that $n=1$ implies that $G$ is a cyclic, which is item (1), and so suppose $n=2$.  Hence $G \cong \mathbb{Z}_{p^{e_1}} \times \mathbb{Z}_{p^{e_2}}$ or $G \cong (\mathbb{Z}_{p^{e_1}})^2$.  In cases other than item (2) or (3), we have $e_1 > 1$.  If $G \cong \mathbb{Z}_{p^{e_1}} \times \mathbb{Z}_{p^{e_2}}$, then $m = 2$, and it follows that $d = e_1 \geq 2$.  Hence $c + d + n \geq a+2$ which implies $c + d + (n-1)n/2 \geq a+1$, and we have $|\mathrm{Aut}(G)|/|G|$ is an integer and is divisible by $p$.  If $G \cong (\mathbb{Z}_{p^{e_1}})^2$, then $m=1$ and $k_1=2$, and $c = 4(e_1-1)$ and $a-n = 2(e_1-1)$.  So $c \geq a-n + 2$, and hence $c + (n-1)n/2 \geq a+1$, and thus $|\mathrm{Aut}(G)|/|G|$ is an integer and is divisible by $p$. 
\end{proof}

Theorem \ref{Intro thm 1} follows.  Write $G \cong G_1 \times \cdots \times G_n$ for $G$ a finite abelian group with each $G_i$ a $p_i$-group, $p_i$ a prime, and $p_i \neq p_j$ for $i \neq j$.  We have $$|\mathrm{Aut}(G)|/|G| = \prod_{i=1}^n |\mathrm{Aut}(G_i)|/|G_i|.$$  Write each $|\mathrm{Aut}(G_i)|/|G_i|$ as $a_i/b_i$, where $a_i$ and $b_i$ are coprime integers (so $b_i=1$ if $|\mathrm{Aut}(G_i)|/|G_i|$ is an integer).  By Proposition \ref{prop: class}, either $b_i=1$ or $b_i=p_i$.  Hence $|\mathrm{Aut}(G)|/|G| = (\prod_{i=1}^n a_i)/(\prod_{i=1}^n b_i)$ where $\prod_{i=1}^n b_i$ is a product of distinct primes.  If we write $|\mathrm{Aut}(G)|/|G| = a/b$ where $a$ and $b$ are coprime integers, then $b$ is a product of distinct primes (or $b=1$), and hence $b$ is squarefree.

We are ready to prove Theorem \ref{Intro thm 3}.

\begin{proposition}
No odd prime can equal $|\mathrm{Aut}(G)|/|G|$ when $G$ is a finite abelian group.
\end{proposition}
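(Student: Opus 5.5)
The plan is a $2$-adic valuation count. Suppose $|\mathrm{Aut}(G)|/|G| = q$ for an odd prime $q$. Write $G \cong G_1 \times \cdots \times G_n$ with $G_i$ a nontrivial $p_i$-group for distinct primes $p_i$. Then, as in the proof of Theorem \ref{Intro thm 1},
$$q = \prod_{i=1}^n |\mathrm{Aut}(G_i)|/|G_i|.$$
I will read off the $2$-adic valuation $v_2$ of each factor from Proposition \ref{prop: class}. Since $v_2(q)=0$, these valuations must sum to $0$, and this leaves very few configurations, which I then check by hand.

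\emph{Odd primes.} For odd $p_i$, every case of Proposition \ref{prop: class} gives a factor with numerator divisible by $p_i-1$, which is even. If $G_i$ is not cyclic, the numerator (or the integer value) is divisible by $(p_i-1)^2$, so the factor has $v_2 \geq 2$. If $G_i$ is cyclic, the factor is $(p_i-1)/p_i$, with $v_2 \geq 1$. In either case the factor is nonnegative in $v_2$, because the denominator $p_i$ is odd.

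\emph{The prime $2$.} Now take $p_i = 2$ and evaluate Proposition \ref{prop: class}:
\begin{itemize}
\item $G_i$ cyclic gives $1/2$, with $v_2 = -1$;
\item $G_i \cong \mathbb{Z}_2 \times \mathbb{Z}_2$ gives $3/2$, with $v_2 = -1$;
\item $G_i \cong \mathbb{Z}_2 \times \mathbb{Z}_{2^i}$ with $i>1$ gives $(2-1)^2 = 1$, with $v_2 = 0$;
\item $G_i \cong (\mathbb{Z}_2)^3$ gives $1\cdot 3\cdot 7 = 21$, with $v_2 = 0$;
\item every other $2$-group gives an integer divisible by $2$, with $v_2 \geq 1$.
\end{itemize}
So the Sylow $2$-part contributes at least $-1$ to $v_2$, and every odd prime present contributes at least $1$. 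Hence at most one odd prime divides $|G|$.

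\emph{Case analysis.} If no odd prime divides $|G|$, then $G$ is a $2$-group, possibly trivial, and its factor must have $v_2 = 0$. The ratio is then $1$ (trivial group or $\mathbb{Z}_2\times\mathbb{Z}_{2^i}$) or $21$. Neither is an odd prime.

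If exactly one odd prime $p$ divides $|G|$, the contributions can only balance as $+1$ from $p$ and $-1$ from $2$. So the Sylow $p$-subgroup is cyclic, contributing $(p-1)/p$, and the Sylow $2$-subgroup is cyclic or $\mathbb{Z}_2\times\mathbb{Z}_2$. The total ratio is then $(p-1)/(2p)$ or $3(p-1)/(2p)$.
\begin{itemize}
\item The first is never an integer, since $p \nmid p-1$.
\item The second is an integer only when $p = 3$, and then it equals $1$.
\end{itemize}
Neither value is an odd prime, which is a contradiction and completes the argument.

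The only delicate step is the valuation bookkeeping. It requires checking that $v_2 \geq 1$ holds in every odd case, including the $p_i$-part of the "all other cases" clause of Proposition \ref{prop: class}. It also requires correctly evaluating the listed $p=2$ cases, especially $(\mathbb{Z}_2)^3$ giving $21$ and $\mathbb{Z}_2\times\mathbb{Z}_{2^i}$ giving $1$. Once those values are fixed, the remaining case check is immediate.
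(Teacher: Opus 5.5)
Your proof is correct and follows essentially the same route as the paper. The paper's argument, that the odd Sylow factors force $4$ into the numerator while the denominators stay squarefree, is your $2$-adic bookkeeping in another form, and both arrive at the same residual cases $(p-1)/(2p)$, $3(p-1)/(2p)$ and the $2$-group values $1$ and $21$. Your $v_2$ formulation is also slightly tidier, since it automatically disposes of the case where $G$ is a single odd $p$-group (or trivial), which the paper's write-up passes over without comment.
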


\begin{proof}
Via the fundamental theorem of abelian groups, write $G = G_1 \times \cdots \times G_n$ where each direct factor $G_i$ is a $p_i$-group, $p_i$ a prime, and $p_1 < \dots < p_n$.  Suppose $p$ is an odd prime and $p = |\mathrm{Aut}(G)|/|G|$.  One writes each $|\mathrm{Aut}(G_i)|/|G_i| = a_i/b_i$ where $a_i$ and $b_i$ are coprime integers, and by Proposition \ref{prop: class} we have $b_i=1$ or $b_i = p_i$. We argue that $n \leq 2$ and $G_1$ is a $2$-group.  Otherwise, if two odd order groups $G_i$ and $G_j$, $i \neq j$, appear as direct factors of $G$, then by Proposition \ref{prop: class}, $(p_i-1)$ divides $a_i$ and $(p_j-1)$ divides $a_j$, and so $4$ divides $a_i a_j$. As $p =|\mathrm{Aut}(G)|/|G| = (\prod_{i=1}^n a_i)/(\prod_{i=1}^n b_i)$, and as $4$ does not divide $\prod_{i=1}^n b_i$, a product of distinct primes, we conclude that $p=2$, a contradiction.  We now argue that $n=1$ and hence $G$ is a $2$-group.  Suppose otherwise.  If $|\mathrm{Aut}(G_2)|/|G_2|$ is an integer, then by Proposition \ref{prop: class}, $(p_2-1)^2$ divides $a_2$ and hence $4$ divides $a_2$, and we conclude that $p=2$, a contradiction.  So $b_2 = p_2$.  If $a_2 \neq p_2-1$, then $(p_2-1)^2$ divides $a_2$, again leading to a contradiction.  So $a_2/b_2 = (p_2-1)/p_2$.  As $p > 2$, and since $2$ divides $a_2$, we must have $b_1 = 2$.  Since $b_2 = p_2$, we cannot have $a_1 = 1$, and it follows by Proposition \ref{prop: class} that $a_1 = 3$.  Now $|\mathrm{Aut}(G)|/|G| = 3(p_2-1)/(2p_2)$ is a prime, and $p_2$ and $p_2-1$ are coprime, and so it follows that $p_2 - 1 = 2$ and $p=p_2=3$.  But then $\mathrm{Aut}(G)|/|G| = 1$, a contradiction.  So $n=1$ and $G$ is a $2$-group.  Now, by Proposition \ref{prop: class}, we see that $|\mathrm{Aut}(G)|/|G|$ is never an odd prime, yielding the final contradiction.
\end{proof}

We close with a few projects related to this research.

\begin{proj}
Completely classify all of the rational numbers that appear as $|\mathrm{Aut}(G)|/|G|$ when $G$ is a finite abelian group.
\end{proj}

\begin{proj}
Describe the rational numbers that appear as $|\mathrm{Aut}(G)|/|G|$ when $G$ is a finite nilpotent group.  Do all of them appear?
\end{proj}

\end{document}